\newcommand*{\myov}[1]{\overbracket[0.65pt][-1pt]{#1}}
\renewcommand{\vec}[1]{{\mathchoice
                     {\mbox{\boldmath$\displaystyle{#1}$}}
                     {\mbox{\boldmath$\textstyle{#1}$}}
                     {\mbox{\boldmath$\scriptstyle{#1}$}}
                     {\mbox{\boldmath$\scriptscriptstyle{#1}$}}}}
\newcommand{\mat}[1]{\mathsf{\mathbf{#1}}}
\newcommand{\GM}[2]{\mathcal{N}\!\left( {#1}, {#2}\right)}
\newcommand{\bias}{\mathrm{Bias}}
\newcommand{\RISK}{\mathrm{Risk}}
\newcommand{\BayesRISK}{\myov{\mathrm{Risk}}}
\newcommand{\MSE}{\mathrm{MSE}}
\newcommand{\obs}{\vec{y}} 
\newcommand{\prcov}{\vec{\Gamma}_{\mathrm{pr}}}
\newcommand{\postcov}{\vec{\Gamma}_{\mathrm{post}}}
\newcommand{\noise}{\vec{\Gamma}_{\mathrm{noise}}}
\newcommand{\reg}{\mat{R}}
\newcommand{\ave}[2]{\mathsf{E}_{{#1}}\left\{ {#2} \right\}}
\newcommand{\norm}[1]{\left\| {#1} \right\|}
\newcommand{\trace}{\mathsf{tr}}
\newcommand{\like}{\pi_{\scriptscriptstyle\text{like}}}
\newcommand{\R}{\mathbb{R}}
\newcommand{\F}{\mat{F}}
\newcommand{\HN}{\mat{H}}
\newcommand{\HM}{\mat{H}_{\mathrm{misfit}}}
\newcommand{\B}{\mathcal{B}}
\renewcommand{\L}[1]{\mathrm{Loss}\left[ {#1} \right]}
\newcommand{\dparh}{\widehat{\dpar}}
\newcommand{\dparpr}{\dpar_{\mathrm{pr}}}
\newcommand{\dparpost}{\dpar_{\mathrm{post}}}
\newcommand{\priorm}{\mu_\text{pr}}
\renewcommand{\Xi}{X^{-1}}
\newcommand{\dpar}{\vec{m}}
\newcommand{\aversmall}[1]{\mathsf{E}\{ {#1} \}}
\newcommand{\aver}[1]{\mathsf{E}\left\{ {#1} \right\}}
\newcommand{\avey}[1]{\mathsf{E}_{{\vec{y}|\dpar}}\left\{ {#1} \right\}}
\newcommand{\regpar}{\upbeta}
\newcommand{\tran}{{\mkern-1.5mu\mathsf{T}}}                
\newcommand{\Law}[1]{\mathscr{L}_{{#1}}}
\newcommand{\ip}[2]{\langle {#1}, {#2}\rangle}
\newcommand{\ipn}[3]{\langle {#1}, {#2}\rangle_{{\scriptscriptstyle{#3}}}}
\newcommand{\hilb}{\mathscr{H}}
\newcommand{\Y}{\mathscr{Y}}
\newcommand{\cc}{\vec{c}}
\newcommand{\hh}{\vec{h}}
\renewcommand{\ss}{\vec{s}}
\newcommand{\xx}{\vec{x}}
\newcommand{\zz}{\vec{z}}
\renewcommand{\AA}{\mat{A}}
\newcommand{\QQ}{\mat{Q}}
\begin{document}
\maketitle

\section{Introduction}
In this note, we consider some basics from theory of point estimation regarding
the notion of the risk of an estimator. Then, taking a Bayesian point of view,
we discuss the notion of Bayes risk.  We focus on finite-dimensional (discretized) 
ill-posed linear inverse problems with Gaussian prior and additive Gaussian noise models.
In this case, we show that the Bayes risk of the posterior mean, relative to the
sum of squares loss function, equals the trace of the posterior covariance
operator; see Proposition~\ref{prp:main}.  Note that in the present linear
Gaussian setting, the posterior mean coincides with the maximum a posteriori
probability (MAP) estimator of the inversion parameter vector.

In a Bayesian inverse problem,
the notion of Bayes risk can be used as a means of assessing the statistical quality of the
estimated parameter. This is important, for example, in the context of optimal
experimental design (OED). In an OED problem, one seeks measurements that optimize
the quality of the estimated parameters.  Bayes risk is one from among several
possibilities for an optimal experimental design criterion; see
e.g,~\cite{ChalonerVerdinelli95,Haber08,Alexanderian21}. 

The results discussed herein are well-known. The purpose of this note is to
provide an accessible and self-contained summary. For further details on the
concepts of risk and Bayes risk of an estimator, see
e.g.,~\cite{Ghosh06,Berger13}.  We consider only inverse problems with
finite-dimensional parameters.  However, the result on Bayes risk
(Proposition~\ref{prp:main}) can be extended to Bayesian linear inverse problems
in infinite-dimensional Hilbert spaces~\cite{AlexanderianGloorGhattas16}.  In
fact, our style of presentation, while limited to a finite-dimensional setup, is
motivated by the more general infinite-dimensional Hilbert space setting. 

\section{Preliminaries}

In this section, we recall some basics from probability and statistics that are
needed in this note.  Throughout, $\R^N$ is always equipped with the
Euclidean inner product, denoted by $\ip{\cdot}{\cdot}$, and the Euclidean norm
$\| \cdot \| = \ip{\cdot}{\cdot}^{1/2}$. If we want to emphasize the dimension
$N$ of the space, when using the Euclidean inner product, we use the notation
$\ipn{\cdot}{\cdot}{N}$. 

\subsection{Background concepts from probability}
We denote the Borel sigma-algebra on $\R^N$ by $\B(\R^N)$.
Let us recall the definition of the mean and covariance operator of a
probability measure $\mu$ on $(\R^N, \B(\R^N))$ that has finite first and 
second moments.
Let $\{\vec{e}_1, \ldots, \vec{e}_N\}$ be the
standard basis in $\R^N$.  The mean $\cc$ and covariance operator
$\QQ$ of $\mu$ are defined as follows:
\[
c_i = \ip{\vec{e}_i}{\cc} = \int_{\R^N} \ip{\vec{e}_i}{\vec{x}} \, 
\mu(d\vec{x}) \quad \mbox{ and }\quad
Q_{ij} = \int_{\R^N} \ip{\vec{e}_i}{\vec{x}-\vec{c}} \ip{\vec{e}_j}{\vec{x}-\vec{c}} \, \mu(d\vec{x}),
\] 
with $i, j \in \{1, \ldots, N\}$.\footnote{The notions of mean and covariance operator
of measures 
extend naturally to the case of probability measures on infinite-dimensional
Hilbert spaces~\cite{DaPrato}.}  Using the definition of mean 
and covariance operator, it is straightforward to show 
\[
    \int_{\R^N} \| \xx \|^2 \,d\xx = \trace(\QQ) + \| \cc\|^2.
\]

Let $\mu$ be a probability measure on $(\R^N, \B(\R^N))$.  The law of a random
variable $\vec Z:(\R^N, \B(\R^N), \mu) \to (\R^M, \B(\R^M))$ is the probability
measure $\Law{\vec Z}$ defined by 
\[
    \Law{\vec Z}(E) = \mu(\vec{Z}^{-1}(E))
\quad E \in \B(\R^M).
\]
See~\cite{Williams} for more details.  We recall the following well-known 
result:
a Borel measurable function $\phi:\R^M \to \R$ 
is integrable with respect to $\Law{\vec{Z}}$
precisely when $\phi \circ \vec{Z}$ is integrable with respect to 
$\mu$. In addition, we have the change of variable formula
\[
\int_{\R^N} \phi(\vec{Z}(\xx)) \mu(d\xx) = \int_{\R^M} \phi(\zz) \Law{\vec{Z}}(d\zz).
\]
This result holds in more general measure theoretic settings;
see~\cite[Section 3.6]{Bogachev07}.
We will also need the following well-known result regarding 
Gaussian measures~\cite{DaPrato}.
We include a proof of this in the
present finite-dimensional setting, for completeness. 
\begin{proposition}\label{prp:change_meas_gaussian}
Let $\mu = \GM{\cc}{\QQ}$ be a Gaussian measure on $(\R^N, \B(\R^N))$. 
Let $\AA \in \R^{M \times N}$ and $\ss \in \R^M$.
Define the random variable $\vec{Z}(\vec x) = \AA \xx + \ss$ 
on $(\R^N, \B(\R^N), \mu)$.
Then, $\vec{Z}$ is a Gaussian random variable with law $\Law{\vec{Z}} = 
\GM{\AA\cc + \ss}{\AA \QQ \AA^\tran}$.
\end{proposition}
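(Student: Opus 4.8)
The plan is to argue through characteristic functions (Fourier transforms of measures) rather than densities. This route is preferable because the claim must hold even when $\QQ$ is singular or $\AA$ fails to have full rank, in which case $\Law{\vec{Z}}$ need not be absolutely continuous and no Jacobian-based change of variables is available. I will use the well-known fact that a probability measure on $(\R^N,\B(\R^N))$ is uniquely determined by its characteristic function, together with the standard formula that the characteristic function of $\mu = \GM{\cc}{\QQ}$ is
\[
\int_{\R^N} e^{i \ip{\vec{t}}{\xx}} \, \mu(d\xx) = \exp\Big( i \ip{\vec{t}}{\cc} - \tfrac12 \ip{\vec{t}}{\QQ \vec{t}} \Big), \qquad \vec{t} \in \R^N.
\]
Since $\QQ$ is symmetric positive semidefinite, so is $\AA\QQ\AA^\tran$, and hence $\GM{\AA\cc+\ss}{\AA\QQ\AA^\tran}$ is a bona fide Gaussian measure on $\R^M$. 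It therefore suffices to show that $\Law{\vec{Z}}$ has the same characteristic function.

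To this end I would fix $\vec{\xi} \in \R^M$ and apply the change of variable formula from the excerpt to the bounded Borel function $\phi(\zz) = e^{i\ip{\vec{\xi}}{\zz}}$, handling its real and imaginary parts separately (each is bounded, hence integrable). This gives
\[
\int_{\R^M} e^{i\ip{\vec{\xi}}{\zz}} \, \Law{\vec{Z}}(d\zz) = \int_{\R^N} e^{i\ip{\vec{\xi}}{\AA\xx + \ss}} \, \mu(d\xx) = e^{i\ip{\vec{\xi}}{\ss}} \int_{\R^N} e^{i\ip{\AA^\tran\vec{\xi}}{\xx}} \, \mu(d\xx),
\]
where the last step uses the adjoint identity $\ip{\vec{\xi}}{\AA\xx} = \ip{\AA^\tran\vec{\xi}}{\xx}$. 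The remaining integral is precisely the characteristic function of $\mu$ evaluated at $\AA^\tran\vec{\xi} \in \R^N$.

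Substituting the Gaussian characteristic function and again using the adjoint identities $\ip{\AA^\tran\vec{\xi}}{\cc} = \ip{\vec{\xi}}{\AA\cc}$ and $\ip{\AA^\tran\vec{\xi}}{\QQ\AA^\tran\vec{\xi}} = \ip{\vec{\xi}}{\AA\QQ\AA^\tran\vec{\xi}}$, I would obtain
\[
\int_{\R^M} e^{i\ip{\vec{\xi}}{\zz}} \, \Law{\vec{Z}}(d\zz) = \exp\Big( i\ip{\vec{\xi}}{\AA\cc+\ss} - \tfrac12 \ip{\vec{\xi}}{\AA\QQ\AA^\tran\vec{\xi}} \Big),
\]
which is exactly the characteristic function of $\GM{\AA\cc+\ss}{\AA\QQ\AA^\tran}$. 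As $\vec{\xi}$ was arbitrary, the uniqueness theorem then yields $\Law{\vec{Z}} = \GM{\AA\cc+\ss}{\AA\QQ\AA^\tran}$.

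The computation itself is routine. The only genuine subtleties are that invoking the uniqueness theorem is what lets the argument avoid densities altogether and thereby cover degenerate $\QQ$ or rank-deficient $\AA$ in a single stroke, and that one must apply the real-valued change of variable formula to a complex integrand by splitting into real and imaginary parts. I do not anticipate any serious obstacle beyond correctly recalling the normalization in the Gaussian characteristic function.
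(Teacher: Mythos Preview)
Your proposal is correct and follows essentially the same approach as the paper: compute the characteristic function (Fourier transform) of $\Law{\vec{Z}}$ via the change of variable formula, use the adjoint identity to reduce to the characteristic function of $\mu$ evaluated at $\AA^\tran\vec{\xi}$, and conclude by the uniqueness theorem. Your additional remarks on degenerate $\QQ$ and on splitting into real and imaginary parts are nice touches the paper omits, but the core computation is identical.
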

\begin{proof}
See the Appendix~\ref{appdx:proof}.
\end{proof}
Let us also recall the following related result: 
let $\vec{X}$ be a $N$-dimensional 
Gaussian random vector with law
$\GM{\cc}{\QQ}$ and let $\AA \in \R^{M \times N}$ and $\ss \in \R^M$. Then,
$\vec{Z} = \AA \vec{X} + \ss$ is also a Gaussian and 
\begin{equation}\label{equ:lin_image}
   \vec{Z} \sim \GM{\AA\cc + \ss}{\AA \QQ \AA^\tran}. 
\end{equation}
See e.g.,~\cite[p.~121]{Gut09} for a direct proof of this for 
Gaussian random vectors.

\subsection{Frequentist risk}
In what follows, $\hilb$ denotes the $N$-dimensional Euclidean space, 
$\dpar \in \hilb$ is an inference parameter, and $\dparh(\vec{y})$ is
an estimator (point-estimate) for $\dpar$ obtained using a realization of data $\vec{y}$. We 
let $\L{\dpar, \dparh(\vec{y})}$ denote a loss function.
An example (and that's the only one we consider) is the sum of 
squares loss function:
\[
   \L{\dpar, \dparh(\vec{y})} = \norm{ \dpar - \dparh(\vec{y})}^2,
\]
where $\| \cdot \|$ is the Euclidean norm.
The data $\obs$ takes values in $\Y = \R^D$. Herein, 
we let $\like(\vec{y} | \dpar)$ be a likelihood function.

From a frequentist point of view, there is nothing random about the parameter
vector $\dpar$. This parameter is an underlying truth we seek to estimate.
On the other hand, the estimator $\dparh$ is a random
variable---it is a function of the random variable $\vec{y}$.  (The 
distribution of $\vec{y}$ is specified by a likelihood model.)  We denote the
(frequentist) \emph{risk} of the estimator
$\dparh$ by $\RISK(\dparh; \dpar)$ and define it as follows,
\[
   \RISK(\dparh; \dpar) = \ave{\vec{y}|\dpar}{\L{\dpar, \dparh(\cdot)}} = \int_\Y \L{\dpar, \dparh(\vec{y})} \like( \vec{y} | \dpar) \, d\vec{y}. 
\]
A common choice for the loss function is the sum of squares loss function, where
we use
\[
   \RISK(\dparh; \dpar) = 
   \ave{\vec{y}|\dpar} {\norm{\dpar - \dparh(\vec{y})}^2} 
   = \int_\Y \norm{\dpar - \dparh(\vec{y})}^2  \like( \vec{y} | \dpar) \, d\vec{y}.
\]
The above risk is often referred to as the mean-squared-error
and is denoted by $\MSE$; i.e., $\MSE(\dparh; \dpar) = \avey{\norm{\dpar - \dparh(\vec{y})}^2}$.
An elementary calculation shows, 
\begin{equation}\label{equ:MSE}
    \MSE(\dparh; \dpar) = \norm{\dpar - \aver{\dparh}}^2 + \aver{ \norm{\dparh - \aver{\dparh}}^2},
\end{equation}
where for brevity we have used $\aver{\cdot}$ in place of $\avey{\cdot}$.
Note that the first term in~\eqref{equ:MSE} quantifies the magnitude of estimation bias, 
$\norm{\dpar - \aver{\dparh}}^2 = \norm{\bias(\dparh)}^2$, 
and the second one describes the variability of the estimator around its mean. 

\section{The $\MSE$ for the solution of an ill-posed linear inverse problem}
Consider a linear forward operator $\F \in \R^{D \times N}$ and 
the following data model 
\[
    \obs = \F \dpar + \vec{\eta},
\]
where $\vec{\eta}$ is a centered Gaussian with covariance $\noise$. This implies, 
$\obs | \dpar \sim \GM{\F\dpar}{\noise}$.
Given a realization of $\obs$, we can 
obtain a point-estimate for the parameter $\dpar$ by computing 
\begin{equation}\label{equ:linearinv}
    \dparh(\obs) = \arg\min_{\dpar \in \hilb}\, 
\frac12 (\F \dpar - \obs)^\tran \noise^{-1} (\F\dpar - \obs) + 
\frac\regpar2 (\dpar - \dpar_0)^\tran \reg (\dpar - \dpar_0).
\end{equation}
Here $\reg \in \R^{N \times N}$ is a (symmetric positive definite) regularization operator, needed 
due to ill-posed nature of the inverse problemm, $\regpar$ is a regularization 
parameter, and $\dpar_0$ is a reference parameter value. 
In many cases $\dpar_0 = \vec{0}$ is used.
The solution to the regularized least-squares problem 
is 
\begin{equation}\label{equ:dparh}
    \dparh(\obs) = \HN^{-1}(\F^\tran\noise^{-1}\obs + \regpar \reg \dpar_0),
\end{equation}
with 
\[
    \HN = \HM + \regpar \reg, \qquad \mbox{ where }\qquad \HM = \F^\tran \noise^{-1} \F.
\]
Note that $\HN$ is the Hessian of the objective function in~\eqref{equ:linearinv}, 
and $\HM$ is the Hessian of the data-misfit part in the objective function.

The following result describes the $\MSE(\dparh; \dpar)$ in the
present setting: 
\begin{proposition}
Let $\dparh$ in~\eqref{equ:dparh} be the estimator of $\dpar$. Then,
\begin{equation}\label{equ:MSE_linear_inv}
    \MSE(\dparh; \dpar) = \regpar^2\norm{\HN^{-1}\reg (\dpar -\dpar_0)}^2 
                             + \trace(\HN^{-2} \HM).
\end{equation}
\end{proposition}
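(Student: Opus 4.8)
The plan is to apply the bias--variance decomposition~\eqref{equ:MSE} and evaluate its two terms separately, with all expectations taken over $\obs \mid \dpar \sim \GM{\F\dpar}{\noise}$, so that $\aver{\obs} = \F\dpar$. The key structural facts I would exploit throughout are that $\dparh$ in~\eqref{equ:dparh} is affine in the data, $\dparh(\obs) = \AA\obs + \HN^{-1}\regpar\reg\dpar_0$ with $\AA := \HN^{-1}\F^\tran\noise^{-1}$, together with the identity $\HM = \HN - \regpar\reg$ coming directly from the definition $\HN = \HM + \regpar\reg$, and the symmetry of $\HN$, $\reg$, and $\noise$.

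For the bias term, I would first compute $\aver{\dparh} = \AA\F\dpar + \HN^{-1}\regpar\reg\dpar_0 = \HN^{-1}\HM\dpar + \HN^{-1}\regpar\reg\dpar_0$, using linearity of the expectation and $\aver{\obs}=\F\dpar$. Substituting $\HM = \HN - \regpar\reg$ gives $\HN^{-1}\HM\dpar = \dpar - \regpar\HN^{-1}\reg\dpar$, so that the $\dpar$- and $\dpar_0$-contributions combine to yield $\aver{\dparh} = \dpar - \regpar\HN^{-1}\reg(\dpar - \dpar_0)$. Hence $\dpar - \aver{\dparh} = \regpar\HN^{-1}\reg(\dpar - \dpar_0)$, and taking the squared norm reproduces the first term $\regpar^2\norm{\HN^{-1}\reg(\dpar - \dpar_0)}^2$ of~\eqref{equ:MSE_linear_inv}.

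For the variance term, I would note that subtracting the mean cancels the deterministic parts, leaving $\dparh - \aver{\dparh} = \AA(\obs - \F\dpar)$. Since $\obs - \F\dpar = \vec{\eta}$ is centered with covariance $\noise$, the fluctuation $\AA\vec{\eta}$ is centered with covariance $\AA\noise\AA^\tran$. Using the symmetry of $\HN$ and $\noise$, so that $\AA^\tran = \noise^{-1}\F\HN^{-1}$, this covariance simplifies to $\AA\noise\AA^\tran = \HN^{-1}\F^\tran\noise^{-1}\F\HN^{-1} = \HN^{-1}\HM\HN^{-1}$. Finally, for any centered random vector the expected squared norm equals the trace of its covariance (the moment identity recorded in the preliminaries, specialized to zero mean), so $\aver{\norm{\dparh - \aver{\dparh}}^2} = \trace(\HN^{-1}\HM\HN^{-1}) = \trace(\HN^{-2}\HM)$ by the cyclic property of the trace, which is the second term.

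I do not anticipate a genuine obstacle here: the computation is elementary once the affine structure is isolated. The step demanding the most care is the algebraic simplification of $\aver{\dparh}$ via $\HM = \HN - \regpar\reg$, since this is where the particular structure of the regularized estimator enters and where sign and bookkeeping errors are easiest to make; one must likewise be careful to invoke the symmetry of $\HN$, $\reg$, and $\noise$ when forming $\AA^\tran$ and collapsing $\noise^{-1}\noise\noise^{-1} = \noise^{-1}$.
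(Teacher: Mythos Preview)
Your proposal is correct and follows essentially the same route as the paper: apply the bias--variance decomposition~\eqref{equ:MSE}, compute $\aver{\dparh}$ using $\HM = \HN - \regpar\reg$ to obtain the bias term, and identify the covariance of the centered fluctuation $\AA\vec{\eta}$ as $\HN^{-1}\HM\HN^{-1}$ to obtain the trace term. The only cosmetic difference is that the paper phrases the variance step via the Gaussian law of $\vec{Z}(\obs)=\dparh(\obs)-\aver{\dparh}$, whereas you invoke the moment identity directly for a centered random vector; the content is the same.
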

\begin{proof}
First we note that
\[
\begin{aligned}
   \bias(\dparh) = \avey{\dparh} - \dpar 
   &= \avey{\HN^{-1}\F^\tran\noise^{-1}\obs} + \regpar \HN^{-1} \reg \dpar_0 - \dpar \\
   &= \HN^{-1}\F^\tran\noise^{-1}\F\dpar - \dpar + \regpar \HN^{-1} \reg \dpar_0\\
   &= \HN^{-1}( \F^\tran \noise^{-1} \F - \HN) \dpar + \regpar \HN^{-1} \reg \dpar_0\\ 
   &= -\regpar\HN^{-1}\reg(\dpar - \dpar_0).
\end{aligned}
\]
Thus, the first term in~\eqref{equ:MSE} is given by $\regpar^2
\norm{\HN^{-1}\reg (\dpar-\dpar_0)}^2$.
Next, we compute the second term in~\eqref{equ:MSE}. To this end, let us
define $\vec{Z}(\obs) = \dparh(\obs) - \aver{\dparh(\cdot)}$, and note that $\vec{Z}$ 
has a 
Gaussian law $\Law{\vec Z}$ with mean
zero and covariance operator,
\[
    (\HN^{-1} \F^\tran \noise^{-1}) \noise (\HN^{-1} \F^\tran \noise^{-1})^\tran = \HN^{-1} \F^\tran \noise^{-1} \F \HN^{-1} 
      = \HN^{-1} \HM \HN^{-1}.
\]
Therefore, 
\[
    \aversmall{\norm{\vec{Z}(\obs)}^2} = \int_\hilb \norm{\vec{z}}^2 
\, \Law{z}(d\vec z)
    = \trace(\HN^{-1} \HM \HN^{-1}) 
    = \trace(\HN^{-2} \HM). \qedhere
\]
\end{proof}

\section{Bayes risk and the case of the Bayesian linear inverse problem}
As seen in the previous section, the $\MSE$ of the least-squares estimate
$\dparh$ has a contribution from the estimation bias; see, 
e.g.,~\eqref{equ:MSE_linear_inv}, where the first term in the sum 
is the size of the estimation bias.  We cannot compute the
bias term because it depends on the unknown parameter $\dpar$ we are trying
estimate. 
As seen shortly, this issue can be dealt with naturally in a Bayesian
framework, where we view $\dpar$ as a random variable.

We equip $\hilb$ with the corresponding Borel sigma algebra and consider the
measurable space $(\hilb, \B(\hilb))$.  In the Bayesian framework, our prior
knowledge regarding the parameter $\dpar$ is encoded in a probability measure
$\priorm$ on $(\hilb, \B(\hilb))$. We refer to $\priorm$ as the prior measure.
This setup, in particular, enables computing the expected value of the bias
term and leads to the notion of \emph{Bayes risk}.  Namely, 
we define the Bayes risk as the expected value of the
frequentist risk with respect to the prior law of $\dpar$.  
In particular, using the $\MSE$ as the risk measure, we define the Bayes risk 
of the estimator $\dparh$ by
\[
   \BayesRISK(\dparh) = \ave{\priorm}{\MSE(\dparh; \dpar)}
                 = \int_\hilb \int_\Y \norm{\dpar - \dparh(\vec{y})}^2  \like( \vec{y} | \dpar) \, d\vec{y} \, \priorm(d\dpar).
\]
We consider a Gaussian prior measure $\priorm = \GM{\dparpr}{\prcov}$.

Next, we cast the linear inverse problem~\eqref{equ:linearinv} in a Bayesian
framework.  The following is based on standard results on Gaussian linear inverse
problems~\cite{Tarantola05}. 
In the Bayesian formulation, we relabel components of
~\eqref{equ:linearinv} and~\eqref{equ:dparh} as follows:
\begin{itemize}
\item $\dpar_0$ is replaced with $\dparpr$;
\item $\regpar\reg$ is replaced with $\prcov^{-1}$;
\item $\HN^{-1}$ is replaced with the covariance operator $\postcov$; and
\item $\dparh(\obs)$ becomes the posterior mean $\dparpost^\obs$.
\end{itemize}
The expression~\eqref{equ:MSE_linear_inv} for the $\MSE$, cast in the present Bayesian framework, is given by
\[
    \MSE(\dparpost^\obs; \dpar) = \norm{\postcov\prcov^{-1} (\dpar-\dparpr)}^2 + \trace(\postcov^{2} \HM).    
\]
The following result, which is the main point of this note, concludes our discussion.
\begin{proposition}\label{prp:main}
We have $\BayesRISK(\dparpost^\obs) = \trace(\postcov)$.
\end{proposition}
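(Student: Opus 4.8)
The plan is to substitute the closed-form expression for $\MSE(\dparpost^\obs;\dpar)$ into the definition of $\BayesRISK(\dparpost^\obs)=\ave{\priorm}{\MSE(\dparpost^\obs;\dpar)}$ and integrate term by term against the prior law $\priorm=\GM{\dparpr}{\prcov}$. The second term, $\trace(\postcov^2\HM)$, does not depend on $\dpar$, so it passes through the integration unchanged. The entire effort therefore reduces to evaluating the expected bias term $\ave{\priorm}{\norm{\postcov\prcov^{-1}(\dpar-\dparpr)}^2}$.

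First I would recognize the map $\dpar\mapsto\postcov\prcov^{-1}(\dpar-\dparpr)$ as an affine image of the prior-distributed $\dpar$. Applying Proposition~\ref{prp:change_meas_gaussian} with $\AA=\postcov\prcov^{-1}$ and $\ss=-\postcov\prcov^{-1}\dparpr$ shows that this image is a centered Gaussian whose covariance operator is $\postcov\prcov^{-1}\,\prcov\,(\postcov\prcov^{-1})^\tran=\postcov\prcov^{-1}\postcov$, where I use the symmetry of $\prcov$ and $\postcov$. The change-of-variables formula together with the moment identity displayed in the preliminaries, specialized to the centered case $\cc=\vec{0}$ so that $\int\norm{\xx}^2\,\mu(d\xx)=\trace(\QQ)$, then gives $\ave{\priorm}{\norm{\postcov\prcov^{-1}(\dpar-\dparpr)}^2}=\trace(\postcov\prcov^{-1}\postcov)$. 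At this stage I have $\BayesRISK(\dparpost^\obs)=\trace(\postcov\prcov^{-1}\postcov)+\trace(\postcov^2\HM)$.

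The remaining step is purely algebraic. Under the Bayesian relabeling, the posterior precision satisfies $\postcov^{-1}=\HM+\prcov^{-1}$, hence $\HM=\postcov^{-1}-\prcov^{-1}$ and $\postcov^2\HM=\postcov-\postcov^2\prcov^{-1}$. Substituting yields $\BayesRISK(\dparpost^\obs)=\trace(\postcov\prcov^{-1}\postcov)+\trace(\postcov)-\trace(\postcov^2\prcov^{-1})$, and the cyclic invariance of the trace gives $\trace(\postcov\prcov^{-1}\postcov)=\trace(\postcov^2\prcov^{-1})$, so the first and third terms cancel and only $\trace(\postcov)$ survives. I expect the one genuine subtlety to be the Gaussian-integral step: correctly identifying the law of the linear image via Proposition~\ref{prp:change_meas_gaussian} and recalling that the expected squared norm of a centered Gaussian is the trace of its covariance. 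Once the precision identity $\postcov^{-1}=\HM+\prcov^{-1}$ is in hand, the concluding cancellation is immediate.
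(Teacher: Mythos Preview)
Your proposal is correct and follows essentially the same route as the paper: identify the law of $\postcov\prcov^{-1}(\dpar-\dparpr)$ under $\priorm$ via Proposition~\ref{prp:change_meas_gaussian}, reduce the expected bias term to $\trace(\postcov\prcov^{-1}\postcov)$, and combine with $\trace(\postcov^{2}\HM)$ using $\postcov^{-1}=\HM+\prcov^{-1}$. The only cosmetic difference is in the closing algebra: the paper factors the sum as $\trace\!\big(\postcov^{2}(\prcov^{-1}+\HM)\big)=\trace(\postcov)$ directly, whereas you substitute $\HM=\postcov^{-1}-\prcov^{-1}$ and cancel; these are equivalent.
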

\begin{proof}
Consider the random variable $\vec Z(\dpar) = 
\postcov\prcov^{-1} (\dpar - \dparpr)$, defined on $(\hilb,\B(\hilb), \priorm)$, 
and note that $\Law{\vec Z} =  \GM{\vec 0}{\postcov\prcov^{-1}\postcov}$. Thus, 
\[
\int_\hilb \norm{\postcov\prcov^{-1} (\dpar - \dparpr)}^2 \, \priorm(d\dpar)
   = \int_\hilb \norm{ \vec{z}}^2 \, \Law{\vec{z}} (d\vec{z}) 
   = \trace(\postcov\prcov^{-1}\postcov)  
   = \trace(\postcov^{2}\prcov^{-1}).
\]
Therefore,
\[
\begin{aligned}
\BayesRISK(\dparpost^\obs) = \int_\hilb \MSE(\dparpost) \, \priorm(d\dpar)
                &= \int_\hilb \norm{\postcov\prcov^{-1} (\dpar - \dparpr)}^2 \, \priorm(d\dpar) + \trace(\postcov^{2} \HM) \\
                &= \trace(\postcov^{2}\prcov^{-1}) +  \trace(\postcov^{2} \HM) = \trace(\postcov).\qedhere
\end{aligned}
\]
\end{proof}

\bibliographystyle{plain}
\bibliography{refs}

\clearpage
\appendix
\section{Proof of Propositon~\ref{prp:change_meas_gaussian}}
\label{appdx:proof}

Before getting into the proof, we recall the notion of the the Fourier
transform of a measure. Let $\mu$ be a measure on $(\R^N, \B(\R^N))$. The
Fourier transform of $\mu$ is  given by
\[
   \widehat{\mu}(\hh) = \int_{\R^N} \exp\left(i\ip{\hh}{\xx}\right) \, \mu(d\xx),
\quad \hh \in \R^N.
\]
The Fourier transform uniquely characterizes a 
measure; see e.g.,~\cite[Proposition 1.7]{DaPrato}. It is known that a Gaussian 
measure $\mu = \GM{\cc}{\QQ}$ on $(\R^N, \B(\R^N))$ has Fourier transform
\[
   \widehat{\mu}(\hh) = \exp\big(i \ip{\hh}{\cc}-\frac12 \ip{\QQ\hh}{\hh}\big), \quad 
   \hh \in \R^N.
\]

We are now ready to prove Proposition~\ref{prp:change_meas_gaussian}.
\begin{proof}[Proof of Proposition~\ref{prp:change_meas_gaussian}]
We prove the result by showing that
\[
\widehat{\Law{\vec{Z}}}(\hh) = \exp\big(\ipn{\hh}{\AA \cc + \ss}{M} + 
                      \frac12\ipn{\AA \QQ \AA^\tran \hh}{\hh}{M}\big), 
\quad \text{for all } \hh \in \R^M.
\]
Note that $\ipn{\cdot}{\cdot}{M}$ denotes the Euclidean 
inner product on $\R^M$. 
Let $\hh \in \R^M$. We have
\[
\begin{aligned}
\widehat{\Law{\vec{Z}}}(\hh) &= \int_{\R^M} \exp\big(i\ipn{\hh}{\zz}{M}\big) 
                                \, \Law{\vec{Z}}(d\zz)\\
&= \int_{\R^N} \exp\big(i\ipn{\hh}{\vec{Z}(\xx)}{M}\big) \, \mu(d\xx)\\
&= \int_{\R^N} \exp\big(i\ipn{\hh}{ \AA \xx + \ss}{M}\big) \, \mu(d\xx)\\
&= \exp\big(i\ipn{\hh}{\ss}{M}\big) 
   \int_{\R^N} \exp\big(i\ipn{\hh}{\AA \xx}{M}\big) \, \mu(d\xx)\\
&= \exp\big(i\ipn{\hh}{\ss}{M}\big) \int_{\R^N} 
    \exp\big(i\ipn{\AA^\tran\hh}{\xx}{N}\big) \, \mu(d\xx)\\
&= \exp\big(i\ipn{\hh}{\ss}{M}\big) 
    \exp\big(i\ipn{\AA^\tran\hh}{\cc}{N}-\frac12\ipn{\QQ\AA^\tran\hh}{\AA^\tran\hh}{N}
        \big) \\
&= \exp\big(i\ipn{\hh}{\ss}{M}\big) 
    \exp\big(i\ipn{\hh}{\AA\cc}{M}-\frac12\ipn{\AA\QQ\AA^\tran\hh}{\hh}{M}
        \big) \\
&= 
    \exp\big(i\ipn{\hh}{\AA\cc+\ss}{M}-\frac12\ipn{\AA\QQ\AA^\tran\hh}{\hh}{M}
    \big). \qedhere \\
\end{aligned}
\]
\end{proof}

\end{document}